\documentclass[12pt, reqno]{amsart}

\usepackage[body={6.3in,9.3in}]{geometry}
\usepackage{verbatim}
\setlength{\topmargin}{-1.2cm}

\usepackage[ansinew]{inputenc}
\usepackage{textcomp}
\usepackage{latexsym}
\usepackage[usenames]{color}
\usepackage{amsmath,amsfonts,amssymb,amsthm}
\usepackage{graphicx}
\usepackage{longtable}
\usepackage{bbm}
\clubpenalty = 10000 \widowpenalty = 10000

\usepackage{setspace}
\usepackage{array}


\def\qed{\hfill{\raggedleft{\hbox{$\Box$}}} \smallskip}

\def\R{\mathbb{R}}


\DeclareMathOperator*{\argmax}{argmax}

\newcommand{\ds}{\displaystyle}

\theoremstyle{plain} \newtheorem{lem}{Lemma}
\theoremstyle{plain} \newtheorem{prop}[lem]{Proposition}
\theoremstyle{plain} \newtheorem{thm}[lem]{Theorem}
\theoremstyle{plain} 
\theoremstyle{plain} 
\theoremstyle{definition} 
\theoremstyle{definition}
\theoremstyle{definition} 
\theoremstyle{definition} 
\theoremstyle{definition}\newtheorem{ex}[lem]{Example}

\newlength\savedwidth

\begin{document}

\title{Combinatorial Types of Tropical Eigenvectors}
\author{Bernd Sturmfels and Ngoc Mai Tran}
\address{ Department of Statistics, University of California, Berkeley, CA 94720-3860, USA}
\email{bernd@math.berkeley.edu, tran@stat.berkeley.edu}
\urladdr{www.math.berkeley.edu/~bernd/, www.stat.berkeley.edu/~tran/}
\thanks{Both authors
were supported by the U.S.~National Science Foundation
(DMS-0757207 and DMS-0968882)}

\begin{abstract}
The map which takes a square matrix to its tropical
eigenvalue-eigenvector pair is piecewise linear.
We determine the cones of linearity of this map. They are simplicial but
they do not form a fan. Motivated by statistical ranking,
 we also study  the restriction of
that cone decomposition to the
subspace of skew-symmetric matrices.
\end{abstract}
\subjclass[2000]{Primary 05C99; Secondary 14T05, 91B12}

\maketitle

\section{Introduction}
Applications such as discrete event systems \cite{BCOQ} lead to the
tropical eigenvalue equation
\begin{equation}
\label{eq:eigeig} A \odot x \,=\, \lambda \odot x .
\end{equation}
Here arithmetic takes place in the
{\em max-plus algebra} $(\mathbb{R}, \oplus, \odot)$, defined by
$u\oplus v = \max{\{u,v\}}$ and $ u\odot v = u + v$.
The real  $n {\times} n$-matrix $A = (a_{ij})$ is fixed. One seeks
to compute all {\em tropical eigenpairs} $(\lambda, x) \in \R \times \R^n$,
that is, solutions of (\ref{eq:eigeig}).
If $(\lambda,x)$ is such a pair for $A$ then so is
$(\lambda, \nu \odot x)$ for any $\nu \in \R$.  We regard
these eigenpairs as equivalent. The pairs $(\lambda,x)$ are thus viewed as elements in
$\R \times \mathbb{TP}^{n-1}$ where
$\mathbb{TP}^{n-1} = \R^n/\R(1,1,\ldots1)$  is the {\em tropical projective torus} \cite{bernd.trop}.
Our point of departure will be the following result.

\begin{prop}
\label{thm:eins}
There exists a partition of matrix space $\R^{n \times n}$ into 
finitely many convex polyhedral cones such that 
each matrix in the interior of a full-dimensional
cone has a unique eigenpair $(\lambda, x)$ in $\R \times \mathbb{TP}^{n-1}$.
Moreover, on each full-dimensional cone in that partition,
the {\em eigenpair map} $A \mapsto (\lambda(A),x(A))$ is represented by a unique linear function
$\,\R^{n \times n} \rightarrow \R \times \mathbb{TP}^{n-1}$.
\end{prop}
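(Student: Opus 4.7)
The plan is to build the required partition in two stages, first fixing the eigenvalue and then the eigenvector. The first stage exploits the classical characterization of the tropical eigenvalue as the maximum cycle mean:
\[
\lambda(A) \;=\; \max_{\sigma} L_\sigma(A), \qquad L_\sigma(A) \;=\; \frac{1}{|\sigma|}\sum_{(i,j)\in\sigma} a_{ij},
\]
with $\sigma$ ranging over the directed cycles in the complete digraph on $\{1,\ldots,n\}$. As a maximum of finitely many linear functionals on $\R^{n\times n}$, this makes the cones
\[
C_\sigma \;=\; \{A\in\R^{n\times n} : L_\sigma(A)\geq L_{\sigma'}(A)\text{ for every cycle }\sigma'\}
\]
a finite polyhedral fan covering $\R^{n\times n}$. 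On the interior of a full-dimensional $C_\sigma$ the cycle $\sigma$ is the unique critical cycle, so $\lambda(A)=L_\sigma(A)$ is a single linear function of $A$.

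The second stage refines each $C_\sigma$ using the Kleene-star formula for the eigenvector. Subtracting $\lambda(A)$ from every entry of $A$ produces the normalized matrix $\tilde A$ whose maximum cycle mean is zero, attained uniquely by $\sigma$. For any node $c$ lying on $\sigma$, the $c$-th column of the Kleene star
\[
\tilde A^{\ast} \;=\; I\oplus \tilde A \oplus \cdots \oplus \tilde A^{\odot (n-1)}
\]
is an eigenvector of $A$ with eigenvalue $\lambda(A)$, and its $i$-th coordinate equals
\[
x_i \;=\; \max_{P\colon c\rightsquigarrow i}\ \sum_{(u,v)\in P} \tilde a_{uv},
\]
where the maximum ranges over simple directed paths $P$ from $c$ to $i$. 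Each $x_i$ is therefore a maximum of finitely many linear functionals in $A$ on $C_\sigma$. I subdivide $C_\sigma$ by recording, for each $i$, one simple path $P_i$ that attains the maximum defining $x_i$. The resulting finite partition of $C_\sigma$ consists of convex polyhedral subcones, on each of which every coordinate of the eigenpair map $(\lambda,x)$ is given by a single linear formula in $A$.

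The main obstacle will be to verify that the eigenpair is genuinely unique, modulo tropical rescaling of $x$, on the interior of a full-dimensional cone of the refinement. This relies on the structure theorem for the max-plus eigenspace: when the critical subgraph of $\tilde A$ consists of a single strongly connected component -- here just the cycle $\sigma$ -- the tropical eigenspace is one-dimensional and is generated by any Kleene-star column indexed by a critical node. Consequently the choice of the basepoint $c$ on $\sigma$ alters $x$ only by a tropical scalar, so $(\lambda,x)$ is a well-defined element of $\R\times \mathbb{TP}^{n-1}$. Polyhedrality and finiteness of the whole decomposition then follow automatically from the two-stage construction, and the number of full-dimensional cones is bounded by the product of the number of directed cycles with the number of tuples $(P_1,\ldots,P_n)$ of simple paths.
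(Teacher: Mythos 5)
Your proof takes essentially the same two-stage approach as the paper: first decompose $\R^{n\times n}$ by the normal fan of the cycle polytope $\mathcal{C}_n$ (your cones $C_\sigma$), then refine each such cone by recording which maximum-weight path attains each eigenvector coordinate, which is exactly the paper's argument in terms of the quantities $B(\Gamma_{i\ell})$. One small slip worth flagging: the $c$-th column of $\tilde A^{\ast}$ has $i$-th entry equal to the maximum weight of a path from $i$ to $c$, not from $c$ to $i$ (you described the $c$-th row), but this convention flip does not affect the validity of the argument.
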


In  tropical linear algebra \cite{butkovic} it is known that the eigenvalue is unique, but the projective tropical eigenspace can be of dimension anywhere between 0 and $n-1$. The proposition implies that the set of matrices with more than one eigenvector lies in the finite union of subspaces of codimension one, and hence a 
generic $n \times n$ matrix has a unique eigenpair. 

The eigenvalue $\lambda(A)$ is the maximum cycle mean of the weighted directed graph with edge weight matrix
$A$; see \cite{BCOQ, butkovic, CG}. As we shall see in (\ref{dual_LP}) below,
the map $A \mapsto \lambda(A)$ is the support function of a convex polytope,
 and hence it is piecewise linear. In this article we study the refinement from eigenvalues to eigenvectors.
 Our main result is as follows:
 
\begin{thm}\label{thm:zwei}
The open cones in $\R^{n \times n}$ on which the eigenpair map
is represented by distinct and unique linear functions
are all linearly isomorphic to $\,\R^n \times \R_{> 0}^{n(n-1)}$. 
These cones are indexed by the connected functions  $\, \phi:[n] \rightarrow [n]$, so their number~is
\begin{equation}
\label{eq:oeisnumber} \sum_{k=1}^n \frac{n!}{(n-k)!} \cdot n^{n-k-1}. 
\end{equation}
For $n \geq 3$, these cones do not form a fan, that is,
two cones may intersect in a non-face.
\end{thm}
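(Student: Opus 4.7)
The plan is to put the open linearity cones in bijection with the connected functions $\phi:[n]\to[n]$, describe each cone by explicit linear inequalities, and then exhibit the fan failure by example.

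Given a generic matrix $A$ in a linearity cone of Proposition~\ref{thm:eins} with eigenpair $(\lambda,x)$, each coordinate $i$ of (\ref{eq:eigeig}) reads $\lambda + x_i = \max_j(a_{ij}+x_j)$, and the argmax is uniquely attained at some index $\phi_A(i)$: a tie would produce two distinct linear rules for $x$ on arbitrarily small neighborhoods, contradicting linearity on the cone. The functional digraph of $\phi_A$ contains a single directed cycle, necessarily coinciding with the unique critical cycle of $A$, while the other vertices form a rooted forest attached to that cycle. Thus $\phi_A$ is a connected function, locally constant on the open cone.

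Conversely, fix a connected function $\phi$ with cycle $C$ of length $k$, and define
\[
\lambda(\phi,A)\,:=\,\tfrac{1}{k}\sum_{i\in C}a_{i,\phi(i)},\qquad
x_{\phi(i)}(\phi,A) - x_i(\phi,A) \,:=\, \lambda(\phi,A) - a_{i,\phi(i)}.
\]
The $k$ cycle-edge equations are consistent by construction, and the $n-k$ tree-edge equations determine $x(\phi,A)$ modulo $\R(1,\ldots,1)$ by propagation outward from $C$. Let $\mathcal{C}_\phi\subset\R^{n\times n}$ be the open region defined by the $n(n-1)$ strict linear inequalities
\[
s_{ij}(A)\,:=\,(a_{i,\phi(i)}+x_{\phi(i)}) - (a_{ij}+x_j)\,>\,0\qquad(j\neq\phi(i)).
\]
For $A\in\mathcal{C}_\phi$, summing the inequalities $s_{i_{t-1},i_t}\geq 0$ around any closed walk $i_0\to\cdots\to i_r=i_0$ telescopes, via $a_{i,\phi(i)}+x_{\phi(i)}=x_i+\lambda(\phi,A)$, to $\frac{1}{r}\sum_t a_{i_{t-1},i_t}\leq\lambda(\phi,A)$, with equality only for the cycle $C$ itself. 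Hence $\lambda(\phi,A)$ is the maximum cycle mean, and $(\lambda(\phi,A),x(\phi,A))$ solves (\ref{eq:eigeig}); so $\mathcal{C}_\phi$ is the linearity cone labelled by $\phi$, inverse to $A\mapsto\phi_A$. Partitioning the entries of $A$ into the $n$ edge-coordinates $a_{i,\phi(i)}$ and the $n(n-1)$ off-edge coordinates $a_{ij}$ with $j\neq\phi(i)$, the map $A\mapsto((a_{i,\phi(i)})_i,(s_{ij}(A))_{j\neq\phi(i)})$ is block-triangular with diagonal blocks $I$ and $-I$, hence a linear bijection that identifies $\mathcal{C}_\phi$ with $\R^n\times\R^{n(n-1)}_{>0}$. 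The count~(\ref{eq:oeisnumber}) then follows from the classical enumeration of connected functional digraphs: summing over cycle length $k$, there are $\binom{n}{k}(k-1)!$ directed $k$-cycles on $[n]$ and $k\cdot n^{n-k-1}$ rooted forests on $[n]$ with the $k$ cycle-vertices as prescribed roots (the forest version of Cayley's formula), and $\binom{n}{k}(k-1)!\cdot k = \frac{n!}{(n-k)!}$.

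To prove the fan failure for $n\geq 3$, I would exhibit two connected functions $\phi\neq\phi'$ whose closures $\overline{\mathcal{C}_\phi}$ and $\overline{\mathcal{C}_{\phi'}}$ meet in a set that is not a face of either. A natural candidate for $n=3$ is a pair that shares the critical cycle but differs at a single tree-edge: the linear rules $x(\phi,\cdot)$ and $x(\phi',\cdot)$ coincide on the wall where the relevant $s_{ij}$ vanishes, but the closures overlap in a stratum lying inside a hyperplane that is not a facet hyperplane of one of the two cones, violating the face condition. The case $n\geq 4$ follows by padding with a fixed generic block. This last step is the main obstacle: the first three steps are essentially combinatorial-linear-algebra bookkeeping once the right coordinates are chosen, whereas the fan failure is a non-local geometric feature that must be witnessed by an explicit low-dimensional example before being bootstrapped.
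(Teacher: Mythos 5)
Your treatment of the bijection, the cone description, and the count is correct, and it takes a genuinely different and in some respects cleaner route than the paper. The paper builds the critical graph from the unique maximal paths $\Gamma_{ii^\ast}$ to the critical cycle (Lemma~\ref{lem:i.star}), establishes realizability of each $\phi$ by an inductive construction, and derives the facet description~(\ref{eq:OmegaCone3}) by a two-stage contradiction argument. You instead read $\phi_A(i)$ off as the argmax in each coordinate of the eigenvalue equation, define $\mathcal{C}_\phi$ directly by the $n(n-1)$ strict inequalities $s_{ij}>0$, and verify in one telescoping computation that $\lambda(\phi,A)$ is the maximum cycle mean with $C$ as the unique critical cycle; this simultaneously delivers the inverse map, realizability, and the product structure $\R^n\times\R_{>0}^{n(n-1)}$ via the block-triangular coordinate change. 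You also derive~(\ref{eq:oeisnumber}) by explicit enumeration ($\binom{n}{k}(k-1)!$ cycles times $k\,n^{n-k-1}$ rooted forests) rather than by citing OEIS. All of this is sound; your uniqueness-of-argmax argument is compressed but correct, since a tie produces two distinct maximal paths and hence two distinct linear formulas on arbitrarily small perturbations.

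The fan-failure sketch, however, contains a genuine error in the proposed mechanism. You suggest witnessing the failure with two connected functions $\phi,\phi'$ that share the same critical cycle and differ at a single tree-edge, say $\phi(i)=j$ versus $\phi'(i)=j'$. But in that case $\overline{\Omega_\phi}$ and $\overline{\Omega_{\phi'}}$ \emph{do} meet along a common facet, namely the hyperplane $a_{ij}+x_j=a_{ij'}+x_{j'}$, and the two eigenvector formulas agree there, so no fan violation occurs; more generally, all cones sharing a fixed critical cycle subdivide the corresponding normal cone of $\mathcal{C}_n$ compatibly. The actual failure is a non-local phenomenon caused by \emph{disjoint} critical cycles, i.e.\ it sits over matrices $A$ whose eigenspace has positive dimension and which lie in a lower-dimensional cone of the normal fan of $\mathcal{C}_n$. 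For $n=3$ the paper's witness (Example~\ref{ex:counter}) is the matrix in~(\ref{eq:onematrix}): the edge $\{13,23\}$ of $\Sigma_3$ (from functions with a $2$-cycle on $\{1,2\}$) and the edge $\{31,32\}$ (from functions fixing $3$) cross in their relative interiors, since $\bar e_{13}+\bar e_{23}=\bar e_{31}+\bar e_{32}$ in $\R^{3\times 3}/V_3$. Your padding argument to pass from $n=3$ to general $n$ is fine once the $n=3$ witness is corrected, but as written the candidate example would not exhibit any failure.
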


Here a function $\phi$ from $[n] = \{1,2,\ldots,n\}$ to itself  is called {\em connected}
if its graph is connected as an undirected graph.
The count in (\ref{eq:oeisnumber}) is the sequence A001865 in \cite{oeis}.
In Section 2, we explain this combinatorial representation and we prove both
Proposition \ref{thm:eins} and Theorem \ref{thm:zwei}.
For $n=3$, the number (\ref{eq:oeisnumber}) equals $17$,
and our cone decomposition is represented by a
$5$-dimensional simplicial complex with f-vector 
$(9,36, 81, 102, 66, 17) $. 
The locus in $\R^{n \times n}$ where the cone decomposition fails to be a fan
consists precisely of the matrices $A$ whose eigenspace
is positive-dimensional.
We explain the details  in Section \ref{sec:example}.

 In Section \ref{sec:symm} we restrict to matrices $A$ that are skew-symmetric, in symbols: $A = -A^T$.  
 Tropical eigenvectors of skew-symmetric matrices arise in  {\em pairwise comparison ranking},
  in the approach that was pioneered by
    Elsner and van den Driessche \cite{elsner, elsner10}. In \cite{nmt}, the second author
    offered a comparison with two other methods for statistical ranking, and
        she noted that the eigenvalue map $A \mapsto \lambda(A)$
    for skew-symmetric $A$ is linear on (the cones over) the facets of the
      cographic zonotope associated with the complete graph on $n$ vertices.
  The tropical eigenvector causes a further subdivision for many of the facets,
  as seen for $n=4$ in \cite[Figure 1]{nmt}. Our Theorem \ref{thm:drei}
 characterizes these subdivisions into cubes for all~$n$.
We close with a brief discussion of the eigenspaces of non-generic matrices.
   
\section{Tropical Eigenvalues and Eigenvectors}

We first review the basics concerning tropical eigenvalues and eigenvectors,
and we then prove our two results. Let $A$ be a real $n \times n$-matrix
and $G(A)$ the corresponding weighted directed graph on $n$ vertices.
It is known that $A$ has
a unique tropical eigenvalue $\lambda(A)$. This eigenvalue
can be computed as the optimal value of
the following linear program:
\begin{equation}
\label{primal_LP}
 {\rm Minimize}\, \,\,\lambda \,\,\, \hbox{subject to} \quad
 a_{ij} + x_j \leq \lambda + x_i\,\,\,
\hbox{for all $\,1 \leq i,j \leq n$}.
\end{equation}
Cuninghame-Green \cite{CG} used the formulation (\ref{primal_LP}) 
to show that the eigenvalue $\lambda(A)$ of a matrix $A$ can be computed
in polynomial time.
For an alternative approach to the same problem we refer to Karp's article \cite{karp}.
The linear program dual to (\ref{primal_LP}) takes the form
\begin{equation}
\label{dual_LP}
\begin{matrix}
& {\rm Maximize} \,\, \sum_{i,j=1}^n a_{ij} p_{ij}
\,\, \,\, \hbox{subject to} \,\,\,\, p_{ij} \geq 0
\,\,\hbox{ for } 1 \leq i,j \leq n , \\ &
 \sum_{i,j=1}^n  p_{ij} = 1  \,\,\,\, \hbox{and}\,\,\,
\sum_{j=1}^n p_{ij} = \sum_{k=1}^n p_{ki}
\,\,\hbox{ for all }\, 1 \leq i \leq n.
\end{matrix}
\end{equation}
The $p_{ij}$ are the variables, and
the constraints require $(p_{ij})$ to be a probability distribution on the edges of $G(A)$ that represents a flow in the directed graph.

 Let $\mathcal{C}_n$ denote the 
$n(n-1)$-dimensional convex
polytope of all feasible solutions to (\ref{dual_LP}). By strong duality, the primal (\ref{primal_LP})
and the dual (\ref{dual_LP}) have the same optimal value.
This implies that the eigenvalue function
$A \mapsto \lambda(A)$ is the
support function of the convex polytope $\mathcal{C}_n$. 
Hence the function $A \mapsto \lambda(A)$ is continuous, convex and piecewise-linear.

By the {\em eigenvalue type} of a matrix $A \in \R^{n \times n}$ we shall mean
the cone in the {\em normal fan} of the polytope $\mathcal{C}_n$
that contains~$A$. Since each vertex of $\mathcal{C}_n$ is the
uniform probability distribution on a directed
cycle in $G(A)$, the eigenvalue $\lambda(A)$ is the maximum cycle mean of $G(A)$. Thus,
the open cones in the normal fan of $\mathcal{C}_n$ are naturally indexed by cycles in the graph on $n$ vertices. 
The cycles corresponding to the normal cone containing the matrix $A$ are the \textit{critical cycles} of $A$.
The union of their vertices is called the set of \textit{critical vertices} in~\cite{butkovic,elsner}

\begin{ex}
\label{ex:bipyramid}
Let $n=3$. There are eight cycles,
two of length $3$, three of length $2$ and three of length $1$,
and hence eight eigenvalue types.
The polytope $\mathcal{C}_3$ is six-dimensional:
it is the threefold pyramid over the bipyramid 
formed by the $3$-cycles and $2$-cycles. \qed
\end{ex}

We have seen that  the normal fan of $\mathcal{C}_n$ partitions $\mathbb{R}^{n\times n}$ into polyhedral cones 
on which of the eigenvalue map $A \mapsto \lambda(A)$ is linear. Our goal is to refine the normal fan of $\mathcal{C}_n$ into cones of linearity for the eigenvector $A \mapsto x(A)$ map. To prove our first result, we introduce some notation and recall some properties of the tropical eigenvector. For a matrix $A \in \mathbb{R}^{n\times n}$, let $B := A \odot (-\lambda(A))$. For a path $P_{ii'}$ from $i$ to $i'$, let $B(P_{ii'})$ denote 
its length (= sum of all edge weights along the path) in the graph of $B$. 
We write
\[ \{\Gamma_{ii'}\} \,:=\, \ds \argmax_{P_{ii'}} B(P_{ii'})\] 
for the set of paths of maximum length from $i$ to $i'$, and write $\Gamma_{ii'}$ if the path is unique. Note that $B(\Gamma_{ii'})$ is well-defined even if there is more than one maximal path, and it is finite since all cycles of $B$ are non-positive. If $j, j'$ are intermediate vertices on a path $P_{ii'}$, then $P_{ii'}(j \to j')$ is the path from $j$ to $j'$ within~$P_{ii'}$. 

It is known from tropical linear algebra \cite{bapat,butkovic}
 that the tropical eigenvector $x(A)$ of a matrix $A$ 
 is unique if and only if the union of its  critical cycles is connected.
 In such cases, the eigenvector $x(A)$ can be calculated by first fixing a critical vertex $\ell$, 
 and then setting
\begin{equation} x(A)_i \,\,= \,\, B(\Gamma_{i\ell}), \label{eqn:evec} \end{equation}
that is, the entry $x(A)_i$ is the maximal length among paths from $i$ to $\ell$ in the graph of $B$.

\begin{proof}[Proof of Proposition \ref{thm:eins}]
Following the preceding discussion, it is sufficient to construct the refinement of each eigenvalue type in the normal fan of $\mathcal{C}_n$. Let $A$ lie in the interior of such a cone. Fix a critical vertex $\ell$. Since the eigenvalue map is linear, for any path $P_{i\ell}$
the quantity $B(P_{i\ell})$ is given by a unique linear form in the entries of $A$.
 A path $Q_{i\ell}$ is maximal if and only if $B(Q_{i\ell}) - B(P_{i\ell}) \geq 0$ for all paths $P_{i\ell} \neq Q_{i\ell}$.
 Hence, by (\ref{eqn:evec}), 
 the coordinate $x(A)_i$ of the eigenvector
 is given by a unique linear function in the entries of $A$ (up to choices of $\ell$) if and only if $\{\Gamma_{i\ell}\}$ has cardinality one, or, equivalently, if and only if 
\begin{equation}
B(Q_{i\ell}) - B(P_{i\ell}) > 0 \,\,\, \mbox{  for all paths  }\, P_{i\ell} \neq Q_{i\ell}. \label{eqn:path.diff}
\end{equation}
 We now claim that, as linear functions in the entries of $A$, the linear forms in (\ref{eqn:path.diff}) are independent of the choice of $\ell$.
Fix another critical vertex $k$. It is sufficient to prove the claim when $(\ell \to k)$ is a critical edge. In this case, for any path $P_{i\ell}$, the path $R_{ik} := P_{i\ell} + (\ell \to k)$ is a path from $i$ to $k$ with $B(R_{ik}) = B(P_{i\ell}) + a_{\ell k} - \lambda(A)$. Conversely, for any path $R_{ik}$, traversing the rest of the cycle from $k$ back to $\ell$ gives a path $P_{i \ell} := R_{ik} + (k \to \ldots \to \ell)$ from $i$ to $\ell$, with $B(P_{i\ell}) = B(R_{ik}) - (a_{\ell k} - \lambda(A))$, since the critical cycle has length $0$ in the graph of $B$. Hence, the map $B(P_{i\ell}) \mapsto B(P_{i\ell}) + a_{\ell k} - \lambda(A)$ is a bijection taking the lengths of paths from $i$ to $\ell$ to the lengths of paths from $i$ to $k$. Since this map is a tropical scaling, the linear forms in (\ref{eqn:path.diff}) are unchanged, and hence they are independent of the choice of $\ell$.
We conclude that  (\ref{eqn:path.diff}) defines the cones promised in Proposition \ref{thm:eins}. 
\end{proof}

Two points should be noted in the proof of Proposition \ref{thm:eins}. Firstly, in the interior of each 
eigenpair cone  (\ref{eqn:path.diff}), for any fixed critical vertex $\ell$ and any 
other vertex $i \in [n]$, the maximal path $\Gamma_{i\ell}$ is unique. Secondly, the number of facet defining equations for these cones are potentially as large as the number of distinct paths from $i$ to $\ell$ for each $i \in [n]$. In Theorem \ref{thm:zwei} we shall show that there are only $n^2-n$ facets. Our proof relies on the following lemma, which is based on an argument we learned from \cite[Lemma 4.4.2]{butkovic}.  

\begin{lem}\label{lem:i.star}
Fix $A$ in the interior of an eigenpair cone (\ref{eqn:path.diff}).
For each non-critical vertex $i$, there is a unique critical vertex $i^\ast$ 
such that the path $\Gamma_{ii^\ast}$ uses no edge in the critical cycle. 
If $j$ is any other non-critical vertex on the path $\Gamma_{ii^\ast}$, then $j^\ast = i^\ast$ 
and $\Gamma_{jj^\ast} = \Gamma_{ii^\ast}(j \to i^\ast)$. 
\end{lem}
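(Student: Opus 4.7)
The plan is to fix a critical vertex $\ell$, use the uniqueness of the maximum path $\Gamma_{i\ell}$ furnished by (\ref{eqn:path.diff}) to decompose this path into an initial segment that avoids the critical cycle and a terminal segment that traces it through critical edges, and then to show that the decomposition depends only on $i$ and not on the chosen $\ell$.

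Concretely, I would first fix any critical $\ell$ and let $i^\ast$ be the first critical vertex visited by $\Gamma_{i\ell}$. The initial segment $\Gamma_{i\ell}(i \to i^\ast)$ then contains no critical vertex except $i^\ast$ at its terminus, so it uses no critical edge. I claim the terminal segment $\Gamma_{i\ell}(i^\ast \to \ell)$ is a walk through critical edges from $i^\ast$ to $\ell$. Indeed, any path $P$ from $i^\ast$ to $\ell$ has length at most the common length $c(i^\ast,\ell)$ of critical walks between these vertices, because completing $P$ with a critical walk from $\ell$ back to $i^\ast$ yields a closed walk at $i^\ast$ that decomposes into cycles of non-positive $B$-weight. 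Maximality of $\Gamma_{i\ell}$ then forces the terminal segment to have length exactly $c(i^\ast,\ell)$, and the uniqueness clause of (\ref{eqn:path.diff}) applied at $i^\ast$ identifies this segment as the unique critical walk realising that length. The shift argument from the proof of Proposition \ref{thm:eins} now shows that a different critical target $\ell'$ produces $\Gamma_{i\ell'}$ with the same initial segment, so both $i^\ast$ and $\Gamma_{i\ell}(i \to i^\ast)$ are intrinsic to $i$.

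Next I would identify this initial segment with $\Gamma_{ii^\ast}$. A strictly longer path from $i$ to $i^\ast$ could be concatenated with a critical walk from $i^\ast$ to $\ell$ to contradict the maximality of $\Gamma_{i\ell}$, so $\Gamma_{ii^\ast}$ equals the initial segment and uses no critical edge. Uniqueness of $i^\ast$ among critical vertices with this property follows by observing that, for any critical $k \neq i^\ast$, the path $\Gamma_{ik}$ is $\Gamma_{ii^\ast}$ followed by a critical walk from $i^\ast$ to $k$, and so it contains critical edges. For the second assertion, let $j$ be a non-critical vertex on $\Gamma_{ii^\ast}$; the sub-path $\Gamma_{ii^\ast}(j \to i^\ast)$ inherits the property of avoiding critical edges, and any strictly longer path from $j$ to $i^\ast$, combined with $\Gamma_{ii^\ast}(i \to j)$, would contradict the maximality of $\Gamma_{ii^\ast}$ just established. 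Hence $\Gamma_{jj^\ast} = \Gamma_{ii^\ast}(j \to i^\ast)$ and $j^\ast = i^\ast$.

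The step I expect to be the main obstacle is the identification of the terminal segment as the critical walk. This requires invoking the strictness of (\ref{eqn:path.diff}) not at the non-critical vertex $i$ from the hypothesis but at the critical vertex $i^\ast$, and then using the shift argument to ensure that the resulting decomposition is genuinely independent of the target $\ell$ rather than an artefact of a chosen reference vertex. Additional care is needed when the critical subgraph consists of several cycles meeting at a vertex; fortunately any two walks through critical edges between two critical vertices have equal $B$-length, so $c(i^\ast,\ell)$ remains well-defined.
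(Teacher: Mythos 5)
Your proof is correct and follows essentially the same route as the paper's: both hinge on the shift argument from the proof of Proposition~\ref{thm:eins} to make $i^\ast$ well-defined independently of the chosen critical target, and both use path concatenation to establish the second statement. Your version makes explicit the decomposition of $\Gamma_{i\ell}$ into a critical-edge-free initial segment followed by the critical walk from $i^\ast$ to $\ell$, a structure the paper's terser argument (which reads off $i^\ast$ directly from the nested chain $\Gamma_{i(\ell+1)} = \Gamma_{i\ell} + (\ell \to \ell+1)$) leaves implicit.
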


\begin{proof}
We relabel vertices so that the critical cycle is $(1 \to 2 \to \ldots \to k \to 1)$.
For any non-critical $i$ and critical $\ell$, the path
$\Gamma_{i\ell}$ is unique, and by the same argument as in the proof of Proposition \ref{thm:eins}, $\Gamma_{i(\ell+1)} = \Gamma_{i\ell} + (\ell \to (\ell+1))$ . Hence there exists a unique critical vertex $i^\ast$ such that $\Gamma_{ii^\ast}$ uses no edge in the critical cycle. 

For the second statement, we note that  $\Gamma_{ii^\ast}(j \to i^\ast)$ uses no edge in the critical cycle. Suppose 
 that $\Gamma_{ji^\ast} \neq \Gamma_{ii^\ast}(j \to i^\ast)$. The concatenation of
 $\Gamma_{ii^\ast}(i \to j) $ and $ \Gamma_{ji^\ast}$ is a path from $i$ to $i^\ast$
 that is longer than  $\Gamma_{ii^\ast}$.
 This is a contradiction and the proof is complete.
 \end{proof}

\begin{proof}[Proof of Theorem \ref{thm:zwei}] 
We define the  {\em critical graph} of $A$ to be the subgraph of
$G(A)$ consisting of all edges in the critical cycle
and all edges in the special paths $\Gamma_{i i^*}$ above.
Lemma \ref{lem:i.star} says that the critical  graph is
the union of the critical cycle with trees rooted at the critical vertices. 
Each tree is directed towards its root.
Hence the critical graph is a connected function $\phi$ on $[n]$,
and this function $\phi$ determines the eigenpair type of the matrix $A$.

We next argue that every connected function $\phi : [n] \rightarrow [n]$ is the critical graph
of some generic matrix $A \in \R^{n \times n}$. If $\phi$ is surjective then $\phi$ is a cycle and we
take any matrix $A$ with the corresponding eigenvalue type.
Otherwise, we may assume that $n$ is not in the image of $\phi$. By induction we can 
find an $(n{-}1) \times (n{-}1)$-matrix $A'$ with critical graph
$\phi \backslash \{(n,\phi(n))\}$. We enlarge $A'$ to the desired $n {\times} n$-matrix $A$
by setting $a_{n,\phi(n)} = 0$ and all other entries very negative. Then $A$ has $\phi$ as its critical graph.
We conclude that, for every connected function $\phi$ on $[n]$,
the set of all $n \times n$-matrices that have the critical graph $\phi$ is a
full-dimensional convex polyhedral cone 
$\Omega_\phi$ in $\R^{n\times n}$, and these are the open cones,
characterized in  (\ref{eqn:path.diff}), on which the eigenpair map is linear.

We next show that these cones are linearly isomorphic to $\,\R^n \times \R_{\geq 0}^{n(n-1)}$. Let $e_{ij}$ denote the standard basis
matrix of $\R^{n \times n}$ which is $1$  in
position $(i,j)$ and $0$ in all other positions. 
Let  $V_n$ denote the $n$-dimensional linear subspace of $\R^{n \times n}$
spanned by the matrices $\sum_{i,j=1}^n  e_{ij} $ and 
$\sum_{j=1}^n e_{ij} - \sum_{k=1}^n e_{ki}$ for $i=1,2,\ldots,n$. Equivalently, $V_n$ is the orthogonal complement to the affine span of 
the cycle polytope $\mathcal{C}_n$. The normal cone at each vertex of $\mathcal{C}_n$ is the sum of $V_n$ and a pointed cone of dimension $n(n-1)$.
We claim that  the subcones $\Omega_\phi$ inherit the same property.
Let $\bar \Omega_\phi$ denote the image of $\Omega_\phi$ in the
quotient space $\R^{n \times n}/V_n$. This is an $n(n-1)$-dimensional pointed
convex polyhedral cone, so it has  at least $n(n-1)$ facets. 
To show that it has precisely $n(n-1)$ facets, we claim that
\begin{equation}
\label{eq:OmegaCone3}
\Omega_\phi \quad = \quad \bigl\{ \, A \in \R^{n \times n} \,:\,
b_{ij} \,\leq\, B(\phi_{ij^\ast}) - B(\phi_{jj^\ast}) \,\,: \,(i,j) \in [n]^2 \backslash \phi \,\bigr \}. \qquad \quad 
\end{equation}
In this formula, $\phi_{ii^\ast}$ denotes the directed path from $i$ to $i^\ast$ in the graph of $\phi$, and $B = (b_{ij}) = (a_{ij} - \lambda_\phi(A))$, where $\lambda_\phi(A)$ is the mean of the cycle in the graph of $\phi$ 
with edge weights $(a_{ij})$. The inequality representation (\ref{eq:OmegaCone3}) will imply
 that the cone $\Omega_\phi$ is linearly isomorphic to  $\R^n \times \R_{\geq 0}^{n(n-1)}$ because there are $n(n-1)$ non-edges $(i,j) \in [n]^2\backslash \phi$. 

Let $A$ be any matrix for which the
 $n(n-1)$ inequalities in (\ref{eq:OmegaCone3}) hold strictly for non-edges of $\phi$. Let $\psi$ denote the connected function corresponding to the critical graph of $A$. To prove the claim,
 we must show that $\psi = \phi$. 
 First we show that $\psi$ and $\phi$ have the same cycle.
  Without loss of generality, let $(1 \to2 \to \ldots \to k \to 1)$ be the cycle in $\phi$, and 
$(i_1 \to i_2 \to \ldots \to i_m \to i_1)$ the cycle in $\psi$.
Assuming they are different, the inequality in (\ref{eq:OmegaCone3}) holds 
strictly for at least one edge in $\psi$. 
Using the identities 
$\,B(\phi_{i_j i_{j+1}^*}) =  B(\phi_{i_j i_{j}^*})  + B(\phi_{i_j^* i_{j+1}^*}) $, we find
\begin{eqnarray*}
\!\! && b_{i_1i_2} + b_{i_2i_3} + \cdots + b_{i_mi_1} \\
\!\! &<& B(\phi_{i_1i_2^\ast}) - B(\phi_{i_2i_2^\ast}) \,+\, B(\phi_{i_2i_3^\ast}) - B(\phi_{i_3i_3^\ast})\, + \,\cdots \,+\, B(\phi_{i_mi_1^\ast}) - B(\phi_{i_1i_1^\ast}) \\
\!\! &=& \! B(\phi_{i_1i_1^\ast}) {+} B(\phi_{i_1^\ast i_2^\ast}) {-} B(\phi_{i_2i_2^\ast}) 
+ B(\phi_{i_2i_2^\ast}) {+} B(\phi_{i_2^\ast i_3^\ast}) 
{-}\,\cdots + B(\phi_{i_mi_m^\ast}) {+} B(\phi_{i_m^\ast i_1^\ast}) {-} B(\phi_{i_1i_1^\ast})  \\
\!\! &=& B(\phi_{i_1^\ast i_2^\ast}) + B(\phi_{i_2^\ast i_3^\ast}) + \cdots + B(\phi_{i_m^\ast i_1^\ast})
\quad = \quad 0 \quad = \quad b_{12} + b_{23} + \cdots + b_{k1}.
\end{eqnarray*}
This contradicts that $\psi$ has maximal cycle mean, hence $\psi$ and $\phi$ have the same unique critical cycle. It remains to show that other edges agree. Suppose for contradiction that there exists a non-critical vertex $i$ in which $\psi(i) \neq \phi(i)$. Since $(i, \psi(i))$ is a non-edge in $[n]^2\backslash \phi$, 
the inequality (\ref{eq:OmegaCone3}) holds strictly by the assumption on the choice of $A$, and we get
\[ B(\phi_{i\psi(i)^\ast}) \,>\, B(\phi_{\psi(i)\psi(i)^\ast}) + b_{i\psi(i)} \,=\, B((i\to \psi(i)) + \phi_{\psi(i)\psi(i)^\ast}). \]
This shows that the path $(i\to \psi(i)) + \phi_{\psi(i)\psi(i)^\ast}$  is not critical, that is, it is not in the graph of $\psi$. Hence, there exists another vertex $i_2$ along the path $\phi_{\psi(i)\psi(i)^\ast}$ such that $\psi(i_2) \neq \phi(i_2)$. Proceeding by induction, we obtain a sequence of vertices $\,i, i_2, i_3, \ldots,$
with this property. Hence eventually we obtain a cycle in $\psi$ that consists entirely of non-edges in $[n]^2\backslash\phi$. But this contradicts the earlier statement that the unique critical cycle in $\psi$ agrees with that in $\phi$. This completes the proof of the first sentence in Theorem \ref{thm:zwei}.

For the second sentence we note that
the number of connected functions in (\ref{eq:oeisnumber}) is the sequence A001865 in \cite{oeis}.
Finally, it remains to be seen that our simplicial cones do not form
a fan in $\R^{n^2}/V_n$ for $n \geq 3$.
We shall demonstrate this explicitly in Example~\ref{ex:counter}.
\end{proof}

\section{Eigenpair Cones and Failure of the Fan Property} \label{sec:example}

Let $(x_\phi,\lambda_\phi): \mathbb{R}^{n\times n} \rightarrow  \mathbb{TP}^{n-1}\times\mathbb{R}$ denote the unique linear map which takes any matrix $A$ in the interior of the cone $\Omega_\phi$ to its eigenpair $(x(A),\lambda(A))$. Of course, this 
linear map is defined  on all of $\mathbb{R}^{n \times n}$, not just on $\Omega_\phi$. The following lemma is a useful characterization of $\Omega_\phi$ in terms of the 
linear map $(x_\phi,\lambda_\phi)$
 which elucidates its product structure as $\mathbb{R}^n \times \R^{n(n-1)}_{\geq 0}$.

\begin{lem} \label{lem:decomp}
 For a matrix $A \in \mathbb{R}^{n\times n}$, we abbreviate
  $\,x := x_\phi(A), \,\lambda := \lambda_\phi(A)$,
  and we set $C = (c_{ij}) = (a_{ij} - x_i + x_j- \lambda)$. Then $A$ is in the interior of the cone $\Omega_\phi$ if and only if
\begin{equation}\label{eqn:C} C_{i\phi(i)} \,= \,0 \,\,\mbox{  for all  } \, i \in [n]
\,\,\,\, \hbox{and}  \,\,\,\, C_{ij} < 0 \mbox{  otherwise.  } \end{equation}
\end{lem}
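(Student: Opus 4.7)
The key observation is that the entry $c_{ij}$ of the matrix $C$ is precisely the slack in the eigenvalue equation for the candidate pair $(x,\lambda) = (x_\phi(A),\lambda_\phi(A))$, since
\[
c_{ij} \;=\; (a_{ij} + x_j) - (\lambda + x_i).
\]
In this language, condition (\ref{eqn:C}) is equivalent to saying that $(x,\lambda)$ is an eigenpair, $A \odot x = \lambda \odot x$, and that for every row $i$ the maximum is attained uniquely at column $\phi(i)$. Both directions of the lemma therefore amount to matching this tight-edge structure with the critical-graph description of $\Omega_\phi$ developed in Theorem \ref{thm:zwei}.

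For the forward direction I would assume $A$ lies in the interior of $\Omega_\phi$, so that $(x,\lambda)$ coincides with the true eigenpair and $\phi$ is the critical graph of $A$. For each $i$ the edge $(i,\phi(i))$ must be tight: if it is a critical-cycle edge this follows by summing the eigenvalue inequalities around the critical cycle, whose mean equals $\lambda$; if it is a tree edge, combining the eigenvector formula (\ref{eqn:evec}) with Lemma \ref{lem:i.star} yields $x_i = b_{i\phi(i)} + x_{\phi(i)}$, which is $c_{i\phi(i)} = 0$. For the remaining $n(n-1)$ pairs $(i,j) \in [n]^2 \setminus \phi$, I would invoke the strict form of the facet inequality (\ref{eq:OmegaCone3}), using the tropical-scaling argument from Proposition \ref{thm:eins} to identify $B(\phi_{ij^*}) - B(\phi_{jj^*})$ with $x_i - x_j$; the inequality then rewrites as $b_{ij} < x_i - x_j$, i.e.\ $c_{ij} < 0$.

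For the converse I would assume (\ref{eqn:C}) and show that $A$ lies in the interior of $\Omega_\phi$. First, (\ref{eqn:C}) directly gives $\max_j (a_{ij}+x_j) = x_i + \lambda$, so $(x,\lambda)$ is an eigenpair of $A$ and $\lambda = \lambda(A)$ by uniqueness of the tropical eigenvalue. Next, for any directed cycle $(j_1 \to \cdots \to j_m \to j_1)$ in $G(A)$ the telescoping identity
\[
\sum_{s=1}^m a_{j_s j_{s+1}} \;=\; m\lambda + \sum_{s=1}^m c_{j_s j_{s+1}} \;\leq\; m\lambda
\]
holds with equality if and only if every edge of the cycle is a $\phi$-edge. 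Since $\phi$ is a connected function with a single simple cycle, this identifies the unique critical cycle of $A$ with the cycle of $\phi$. Iterating the unique argmax $j = \phi(i)$ from any non-critical $i$ then produces a path that avoids the critical cycle and enters it at a critical vertex, matching Lemma \ref{lem:i.star}. Hence the critical graph of $A$ equals $\phi$, placing $A$ in $\Omega_\phi$, and the strict inequalities $c_{ij} < 0$ combined with (\ref{eq:OmegaCone3}) certify interiority.

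The step I expect to require the most care is the identification $B(\phi_{ij^*}) - B(\phi_{jj^*}) = x_i - x_j$ used in the forward direction. The linear form $x = x_\phi(A)$ is defined only modulo the global additive constant $(1,\ldots,1)$, and the path lengths $B(\phi_{ij^*})$ carry an implicit dependence on the chosen critical basepoint; the tropical-scaling step in the proof of Proposition \ref{thm:eins} is precisely what makes the difference gauge-invariant, and it is this bridge that converts (\ref{eq:OmegaCone3}) into the cleaner condition (\ref{eqn:C}).
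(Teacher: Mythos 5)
Your proof is correct, but it takes a noticeably more computational route than the paper's. The paper's argument hinges on a single normalization trick: since the matrix $(x_i - x_j + \lambda)$ lies in the lineality space $V_n$ of $\Omega_\phi$, replacing $A$ by $C$ does not change which eigenpair cone (or cone interior) the matrix lies in; and once you pass to $C$, which has eigenvalue $0$, eigenvector $0$, and $c_e = 0$ on every $\phi$-edge, all of the path sums $B(\phi_{ij^\ast})$ and $B(\phi_{jj^\ast})$ appearing in the inequality description (\ref{eq:OmegaCone3}) collapse to $0$, so that description degenerates immediately to ``$c_{ij} \le 0$ for non-edges, with strict inequality on the interior,'' which is exactly (\ref{eqn:C}). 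You instead stay with $A$ throughout, verify tightness $c_{i\phi(i)}=0$ by separate arguments on cycle edges (summing the slacks around the cycle) and tree edges (unrolling (\ref{eqn:evec}) via Lemma \ref{lem:i.star}), establish the identity $B(\phi_{ij^\ast}) - B(\phi_{jj^\ast}) = x_i - x_j$ to translate (\ref{eq:OmegaCone3}) into $c_{ij}<0$, and in the converse re-derive the critical cycle and critical graph of $A$ from scratch via the telescoping cycle computation. Everything checks out --- the identity $B(\phi_{ij^\ast}) - B(\phi_{jj^\ast}) = x_i - x_j$ does hold, because the critical cycle has weight $0$ in $B$ so path lengths along it behave like a potential --- and your final remark correctly flags this gauge-invariance as the delicate point. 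What the paper's approach buys is brevity: the equivalence with (\ref{eq:OmegaCone3}) becomes a one-line observation after normalization. What your approach buys is transparency: it makes explicit exactly which structural facts (cycle mean, tree-path recursion, uniqueness of the critical cycle) force each $c_{ij}$ to vanish or be negative, and it reproves the $\psi=\phi$ identification for the converse rather than quoting the cone membership abstractly. Either is a valid proof.
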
 

\begin{proof} Since the matrix $(x_i-x_j+\lambda)$ is in the linear subspace $V_n$, 
the matrices $A$ and $C$ lie in the same eigenpair cone $\Omega_\phi$.
Since $C_{ij} \leq 0$ for all $i, j = 1, \ldots, n$, the conditions (\ref{eqn:C}) are thus equivalent to 
 \[ (C \odot [0,\ldots,0]^T)_i \,=\,  \max_{k\in[n]} C_{ik} \,=\, C_{i\phi(i)} \,=\,  0  \quad \hbox{ for all $i \in [n]$.} \]
In words, the matrix $C$ is a normalized version of $A$
which has eigenvalue $\lambda(C) = 0$ and eigenvector $x(C) = [0,\ldots,0]^T$.
The condition (\ref{eqn:C}) is equivalent to that in (\ref{eq:OmegaCone3}), with strict inequalities for 
$\{(i,j): j\neq \phi(i)\}$, and it holds if and only if $C$ is in the interior of $\Omega_\phi$.
\end{proof}

The linear map $A \mapsto (C_{ij}: j \neq \phi(i))$ defined in Lemma \ref{lem:decomp} 
realizes the projection from the eigenpair cone $\Omega_\phi$ onto 
its pointed version $\bar \Omega_\phi$. 
Thus, the simplicial cone $\bar \Omega_\phi$ is 
spanned by the images in $\R^{n \times n} /V_n$  
of the matrices $-e_{ij}$ that are indexed by the $n(n-1)$ non-edges:
\begin{equation}
\label{eq:OmegaCone1}
\quad \Omega_\phi \quad = \quad
V_n \,\, + \,\, \R_{\geq 0} \bigl\{ -e_{ij} \,:\,(i,j) \in [n]^2 \backslash \phi \bigr\}
\qquad \simeq \quad \R^n \times \R_{\geq 0}^{n(n-1)}.
\end{equation}
At this point, we find it instructive to work out the eigenpair cone
$\Omega_\phi$ explicitly for a small example, and 
to verify the equivalent  representations (\ref{eq:OmegaCone3}) and
(\ref{eq:OmegaCone1}) for that example.

\begin{ex}[$n\!=\!3$] \label{ex:ineq}
Fix the connected function $\phi = \{12,23,31\}$. 
 Its eigenvalue functional is  $\lambda := \lambda_\phi(A) = \frac{1}{3}(a_{12} + a_{23} + a_{31})$.
 The eigenpair cone  $\Omega_\phi$ is $9$-dimensional and  is
  characterized by $3\cdot 2 = 6$ linear inequalities,   one for each of
 the six non-edges $(i,j)$, as in (\ref{eq:OmegaCone3}).
 For instance, consider the non-edge $(i,j) = (1,3)$.
 Using the identities
 $B(\phi_{13^\ast}) = b_{12} + b_{23} = a_{12} + a_{23} - 2 \lambda$
 and $B(\phi_{33^\ast}) = b_{31}+b_{12}+b_{23} =   a_{31}+a_{12}+a_{23} - 3 \lambda$,
  the inequality $ \,b_{13} \leq B(\phi_{13^\ast}) - B(\phi_{33^\ast}) \,$ in
(\ref{eq:OmegaCone3}) translates into $\, a_{13} \,\leq\,    2 \lambda -   a_{31} \,$ and hence~into
\[ a_{13} \,\, \leq \,\, \frac{1}{3}  (2 a_{12} + 2 a_{23} - a_{31}).\]
Similar computations for all six non-edges of $\phi$ give the
following six linear inequalities:
\begin{eqnarray*}
a_{11} \leq \frac{1}{3}( a_{12}+  a_{23} + a_{31})  , &
a_{22} \leq  \frac{1}{3} (a_{12} + a_{23} +  a_{31}) ,&
a_{33} \leq  \frac{1}{3} (a_{12} + a_{23}+ a_{31}) , \\
a_{13} \leq \frac{1}{3}  (2 a_{12} + 2 a_{23} - a_{31}) ,&
a_{32} \leq \frac{1}{3} (2 a_{12} -  a_{23} + 2 a_{31}) ,  &
a_{21} \leq \frac{1}{3} (-a_{12}+ 2 a_{23} + 2 a_{31}).
\end{eqnarray*}
The eigenpair cone $\Omega_\phi$ equals the
set of solutions in $\mathbb{R}^{3 \times 3}$ to this system of inequalities.

According to Lemma \ref{lem:decomp}, these same inequalities
can also derived from (\ref{eq:OmegaCone3}). We have
$$  x: \,= \,x_\phi(A) \,= \, \bigl[\, a_{12} + a_{23} - 2 \lambda, \,a_{23} - \lambda, \,0 \, \bigr]^T. $$
The equations $c_{12} = c_{23} = c_{31} = 0$ 
in (\ref{eq:OmegaCone1}) are equivalent to
\begin{eqnarray*} a_{12} = x_1 - x_2 + \lambda, & a_{23} = x_2 - x_3 + \lambda, & a_{31} = x_3 - x_1 + \lambda,
\end{eqnarray*}
and the constraints $c_{11}, c_{13}, c_{21}, c_{22} ,c_{32}, c_{33} < 0$ translate
into the six inequalities above. \qed
\end{ex}

To describe the combinatorial structure of the
eigenpair types, we introduce a simplicial complex $\Sigma_n$
on the vertex set $[n]^2$.  The facets (= maximal simplices) of $\Sigma_n$ are the
complements $[n]^2 \backslash \phi$ where $\phi$
runs over all connected functions on $[n]$.
Thus $\Sigma_n$ is pure of dimension $n^2-n-1$,
and the number of its facets equals (\ref{eq:oeisnumber}).
To each simplex $\sigma $ of $ \Sigma_n$ we associate
the simplicial cone $\R_{\geq 0} \{\bar e_{ij} : (i,j) \in \sigma \}$
in $\R^{n \times n}/V_n$.
We have shown that these cones form a decomposition of 
$\R^{n \times n}/V_n$ in the sense that every generic matrix lies
in exactly one cone. The last assertion in Theorem \ref{thm:zwei}
states that these cones do not form a fan.
We shall now show this for $n=3$
by giving a detailed combinatorial analysis of $\Sigma_3$.

\begin{ex}\label{ex:counter}[$n=3$] We here present the {\em proof of the third and final part of Theorem \ref{thm:zwei}}.
\\
The simplicial complex $\Sigma_3$ is $5$-dimensional, and it 
has $9$ vertices, $36$ edges, $81$ triangles,  etc.
The  f-vector  of $\Sigma_3$ is $(9,36, 81, 102, 66, 17) $.  The $17$ facets of
$\Sigma_3$ are, by definition, the set complements
of the $17$ connected functions $\phi$ on $[3] = \{1,2,3\}$. 
For instance, the connected function $\phi = \{12, 23, 31\}$ in 
Example \ref{lem:decomp} corresponds to the facet
$\{11,13,21,22,32,33\}$ of $\Sigma_3$. This $5$-simplex can be 
written as  $ \{11, 22, 33\}\ast\{21, 32, 13 \}$, the join of two triangles,
so it appears as  the central triangle on the left in Figure \ref{fig:complex}.

Figure \ref{fig:complex} is a pictorial representation of the simplicial
complex $\Sigma_3$. Each of the drawn graphs represents its clique complex,
and $*$ denotes the join of simplicial complexes.
The eight connected functions $\phi$ whose cycle has
length $\geq 2$ correspond to the eight facets on the left in Figure \ref{fig:complex}.
Here  the triangle $\{11,22,33\}$ is joined with the depicted cyclic triangulation of the boundary
of a triangular prism. The other nine facets of $\Sigma_3$ come in three
groups of three, corresponding to whether $1,2$ or $3$ is fixed by $\phi$.
For instance, if $\phi(3) = 3$ then the facet $[3]^2 \backslash \phi $ is the join of
the segment $\{11, 22\}$ with one of the three tetrahedra
in the triangulation of the solid triangular prism on the right in Figure~\ref{fig:complex}.

\begin{figure}
	\begin{center}
	 \includegraphics[width=\textwidth]{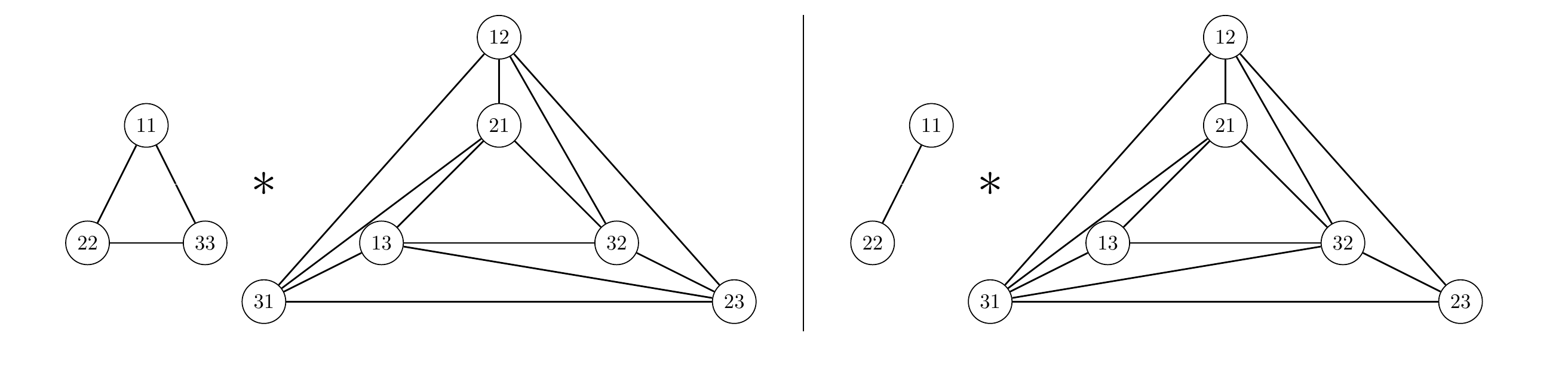}
	 \vskip -0.4cm
\caption{The simplicial complex $\Sigma_3$ of connected functions $\phi : [3] \rightarrow [3]$.
Fixed-point free $\phi$ are on the left and functions
with $\phi(3) = 3$ on the right.}
\label{fig:complex}
	\end{center}
\end{figure}

In the geometric realization given by the cones $\Omega_\phi$, the
square faces of the triangular prism are flat. However, we see that
both of their diagonals appear as edges in $\Sigma_3$.  This proves 
that the cones covering these diagonals do not fit together to form a fan.  \qed
\end{ex}

Naturally, each simplicial complex $\Sigma_n$  for $n > 3$ contains $\Sigma_3$
as a subcomplex, and this is compatible with the embedding of the cones.
Hence the eigenpair types fail to form a fan for any $n \geq 3$.
For the sake of concreteness,
we note that the $11$-dimensional simplicial complex $\Sigma_4$
has  f-vector $  (16, 120, 560, 1816, 4320, 7734, 10464, 10533, 7608, 3702, 1080, 142)$.

\medskip

The failure of the fan
property is caused by the existence of
matrices that have disjoint critical cycles. Such a matrix
lies in a lower-dimensional cone in the normal fan of
$\mathcal{C}_n$, and it has two or more
eigenvectors in $\mathbb{TP}^{n-1}$
that each arise from the unique eigenvectors
on neighboring full-dimensional cones.
These eigenvectors have distinct critical graphs $\phi$
and $\phi'$ and the cones $\Omega_\phi$
and $\Omega_{\phi'}$ do not intersect
along a common face.
In other words, the failure of the fan property 
reflects the discontinuity in the eigenvector map
$A \mapsto x(A)$.

For concrete example, 
consider the edge connecting $13$ and $23$
on the left in Figure \ref{fig:complex} and the
edge connecting $31$ and $32$ on the right in Figure \ref{fig:complex}.
These edges intersect in their relative interiors, thus violating the
fan property. In this intersection we find the matrix
\begin{equation}
\label{eq:onematrix}
 A \,\,\, = \,\,\, \begin{pmatrix}
\phantom{-} 0 & \phantom{-}0 & -1\,\, \\
\phantom{-} 0 & \phantom{-}0 & -1\,\,\\
-1 & -1 & \phantom{-} 0 \,\, 
\end{pmatrix},
\end{equation}
whose eigenspace is a tropical segment in $\mathbb{TP}^2$.
Any nearby generic matrix has a unique eigenvector, and that
eigenvector lies near one of the two endpoints of the tropical segment.
A diagram like Figure \ref{fig:complex} characterizes the combinatorial
structure of  such discontinuities.

\section{Skew-Symmetric Matrices}\label{sec:symm}

This project arose from the application of tropical eigenvectors
to the statistics problem of inferring rankings from pairwise comparison matrices.
This application was pioneered by Elsner and van den Driessche \cite{elsner, elsner10} and
further studied in \cite[\S 3]{nmt}.
Working on the additive scale, any pairwise comparison matrix $A = (a_{ij})$ is {\em skew-symmetric}, i.e. it satisfies
$a_{ij} + a_{ji} = 0$ for all $1 \leq i,j \leq n$.
The set $\wedge_2 \R^n$ of all skew-symmetric matrices is a linear subspace
of dimension $\binom{n}{2}$ in $\R^{n \times n}$. The input
of the {\em tropical ranking algorithm} is a generic matrix $A \in \wedge_2 \R^n $ and the
output is the permutation of $[n] = \{1,\ldots,n\}$ given by
sorting the entries of the eigenvector $x(A)$.
See \cite{nmt} for a comparison with other ranking methods.

In this section we are interested in the combinatorial types of eigenpairs when restricted to the space
$\wedge_2 \R^n$ of skew-symmetric matrices. In other words, we shall study the  decomposition of 
this space into the convex polyhedral cones $\Omega_\phi \cap \wedge_2 \R^n$
where $\phi$ runs over connected functions on $[n]$. Note that,
$\,\lambda(A) \geq 0 $ for all $A \in \wedge_2\R^n$, and the equality $\lambda(A) = 0$
holds if and only if $ A \in V_n$. Hence the intersection $\Omega_\phi \cap \wedge_2 \R^n$
 is trivial for all connected functions $\phi$ whose cycle has length $\leq 2$. This motivates the following definition.

We define a {\em kite} to be a connected function $\phi$ on $[n]$ whose cycle has length $\geq 3$.
By restricting the sum in (\ref{eq:oeisnumber}) accordingly, we see that
the number of kites on $[n]$ equals
\begin{equation}
\label{eq:oeisnumbers} \sum_{k=3}^n \frac{n!}{(n-k)!} \cdot n^{n-k-1}. 
\end{equation}
Thus the number of kites for $n=3,4,5,6,7,8$ equals
$ 2, 30, 444, 7320, 136590, 2873136 $.
The following result is the analogue to Theorem \ref{thm:zwei}
for skew-symmetric matrices.

\begin{thm} \label{thm:drei}
The open cones in $\wedge_2 \R^n$
on which the tropical eigenpair map
for skew-symmetric matrices is represented by distinct and unique linear functions
are $\Omega_\phi \cap \wedge_2 \R^n$ where $\phi$ runs over 
 all kites on $[n]$. Each cone has $n(n-3)$ facets, so it is not simplicial,
but it is  linearly isomorphic to $\R^{n-1}$ times the 
cone over the standard cube of dimension $\,n(n-3)/2 = \binom{n}{2}-n$.
This collection of cones does not form a fan for $n \geq 6$.
\end{thm}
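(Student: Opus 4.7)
The plan is to reduce everything to Lemma~\ref{lem:decomp} and Theorem~\ref{thm:zwei}, and then to exploit the skew-symmetry relations pair by pair. First I would verify that only kites contribute full-dimensional cones. For any skew-symmetric $A$ in $\Omega_\phi$ whose cycle has length $1$ or $2$, the mean along that cycle vanishes, so $\lambda(A) = 0$ and hence $A \in V_n \cap \wedge_2 \R^n$, an $(n-1)$-dimensional subspace. Thus $\dim(\Omega_\phi \cap \wedge_2 \R^n) < \binom{n}{2}$ unless the cycle of $\phi$ has length at least $3$, and Theorem~\ref{thm:zwei} restricted to $\wedge_2 \R^n$ then identifies the remaining full-dimensional cones exactly with the $\Omega_\phi \cap \wedge_2 \R^n$ as $\phi$ runs over the kites.

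Next, for a fixed kite $\phi$, I would use Lemma~\ref{lem:decomp} to write $A = M(x,\lambda) + C$ and impose $A + A^T = 0$. This forces $c_{ii} = -\lambda$ and $c_{ij} + c_{ji} = -2\lambda$ for $i \neq j$, on top of $c_{i\phi(i)} = 0$. Because the kite cycle has length at least $3$, the non-edges of $\phi$ split into three groups: the $n$ diagonals, the $n$ partners $(\phi(i), i)$ of the $\phi$-edges, and $2\bigl(\binom{n}{2} - n\bigr)$ positions coming from unordered pairs $\{i,j\}$ with neither orientation an edge of $\phi$. The diagonals and the partners each reduce to the single inequality $\lambda \geq 0$, while each pair $\{i,j\}$ of the third type, after the change of variable $s_{\{i,j\}} := c_{ij} + \lambda$, yields the independent constraints $|s_{\{i,j\}}| \leq \lambda$. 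For $n \geq 4$ at least one such pair exists and $\lambda \geq 0$ becomes redundant, leaving precisely $n(n-3)$ facet inequalities, which come in opposite pairs $s_{\{i,j\}} \leq \lambda$ and $-s_{\{i,j\}} \leq \lambda$. The map $A \mapsto (x,\lambda,s)$ identifies $\Omega_\phi \cap \wedge_2 \R^n$ with $\R^{n-1}$ (parametrizing the lineality space $V_n \cap \wedge_2 \R^n$) times the cone $\{(\lambda,s) : |s_j| \leq \lambda\}$, which by construction is the cone over the standard cube of dimension $n(n-3)/2$.

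The final sentence, failure of the fan property for $n \geq 6$, is where I expect the main work. The strategy parallels Example~\ref{ex:counter}: I would exhibit a skew-symmetric matrix that admits two vertex-disjoint critical cycles of length at least $3$, which first becomes possible at $n = 6$. A concrete candidate is the matrix with $a_{12} = a_{23} = a_{31} = a_{45} = a_{56} = a_{64} = 1$, $a_{ji} = -a_{ij}$, and all other off-diagonal entries equal to zero, whose critical cycles are the disjoint $3$-cycles $(1,2,3)$ and $(4,5,6)$. Generic perturbations select one or the other cycle as the unique critical cycle, so the matrix lies in the closure of cones $\Omega_\phi \cap \wedge_2 \R^n$ for some kite $\phi$ with cycle $(1,2,3)$ and also for some kite $\phi'$ with cycle $(4,5,6)$. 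The obstacle is to show that the two closures intersect in more than a common face: using the cube parametrization above, the opposite cube facets that collapse at the bad matrix in the two cones correspond to different pair coordinates, so the intersection cannot be a face of either cone, mirroring the diagonal-crossing mechanism of Example~\ref{ex:counter}. A secondary check is that for $n = 4, 5$ no two vertex-disjoint cycles of length at least $3$ fit in $[n]$, so no analogous obstruction exists in those cases.
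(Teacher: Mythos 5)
Your proof is correct and takes a genuinely different route from the paper on the central step: the cube structure of $\Omega_\phi \cap \wedge_2\R^n$. The paper proves non-emptiness of the interior by an induction on $n-k$ (extending a kite on $[n]$ to one on $[n+1]$ by solving a system of upper and lower bounds for the new row/column), and then observes after the fact that the accumulated constraints are upper and lower bounds on independent linear forms, hence a cube. You instead apply Lemma~\ref{lem:decomp} directly: imposing skew-symmetry on $A = M(x,\lambda) + C$ yields $c_{ii} = -\lambda$, $c_{\phi(i)i} = -2\lambda$, and the paired constraints $c_{ij} + c_{ji} = -2\lambda$, so the change of variable $s_{\{i,j\}} = c_{ij} + \lambda$ makes the $n(n-3)/2$ independent cube coordinates $|s_{\{i,j\}}|\le\lambda$ appear in one step, with the diagonal and reverse-edge constraints both collapsing to the (redundant for $n\ge 4$) inequality $\lambda\ge 0$. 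This also makes non-emptiness of the interior immediate, so the induction is avoided entirely. In exchange, you lean more heavily on Lemma~\ref{lem:decomp}, whereas the paper's induction is self-contained given \eqref{eq:OmegaCone3}. A minor point in your favor: the paper's intermediate assertion that $V_n\cap\wedge_2\R^n$ is $n$-dimensional (with quotient dimension $n(n-3)/2$) is off by one -- that intersection is $(n-1)$-dimensional since the all-ones generator of $V_n$ is symmetric -- and your bookkeeping $\R^{n-1}\times\{(\lambda,s): |s_j|\le\lambda\}$ gets the dimension count right.

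On the fan failure, your argument is at roughly the same level of detail as the paper's, which only gestures at ``an analogous argument.'' You improve on this by exhibiting the concrete skew-symmetric matrix with disjoint critical triangles $(1,2,3)$ and $(4,5,6)$. The remaining ``collapsing opposite cube facets'' claim is still heuristic: the constraint that $(4,5,6)$ has cycle mean $\le\lambda$ is the sum $c_{45}+c_{56}+c_{64}\le 0$ of three cube constraints, not a single facet, so the bad matrix sits on a codimension-$3$ face rather than a facet, and the incompatibility of the two face structures needs a sentence more than you give. But since the paper itself leaves this step at the hand-waving stage, I would not count it as a gap relative to the source. Your observation that $n = 4,5$ admit no two vertex-disjoint $\ge 3$-cycles is a useful consistency check, though note it does not by itself prove the fan property \emph{holds} for $n=4,5$.
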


\begin{proof}
It follows from our results in Section 2 that each cone of linearity 
of the map 
$$ \wedge_2 \R^n \rightarrow \R \times \mathbb{TP}^{n-1} \, , \,\,
A \mapsto (\lambda(A), x(A)) $$
has the form $\Omega_\phi \cap \wedge_2 \R^n$ for some kite $\phi$.
Conversely, let $\phi$ be any kite on $[n]$ with cycle $(1 \to 2 \to \ldots \to k \to 1)$.
We must show that  $\Omega_\phi \cap \wedge_2 \R^n$ has non-empty 
interior (inside $\wedge_2 \R^n$).
We shall prove the statement by induction on $n-k$. Note that this would prove distinctiveness, for the matrices constructed in the induction step lie strictly in the interior of each cones. The base case $n-k = 0$ is easy: here the skew-symmetric matrix
$A = \sum_{i=1}^n (e_{i \phi(i)} - e_{\phi(i) i})$ lies in the interior of $ \Omega_\phi$.

For the induction step, suppose that
$A $ lies in the interior of $\Omega_\phi \cap \wedge_2 \R^n$, and fix
an extension of $\phi$ to $[n+1]$ by setting $\phi(n+1) = 1$.
Our task is to construct a suitable matrix $A \in \wedge_2 \R^{n+1}$
that extends the old matrix and realizes the new $\phi$.
To do this, we need to solve for the $n$ unknown entries
$a_{i,n+1} = -a_{n+1,i}$, for $i=1,2,\ldots,n$.

By (\ref{eq:OmegaCone3}), 
 the necessary and sufficient conditions for $A$ to satisfy $\phi(n+1) = 1$ are
\begin{align*}
a_{(n+1)\, j} &\,\leq\, \lambda(A) + B(\phi_{(n+1) \, j^\ast}) - B(\phi_{jj^\ast}) ,\\
a_{j\, (n+1)} &\,\leq \, \lambda(A) + B(\phi_{j j^\ast}) - B(\phi_{1 j^\ast}) - B(\phi_{(n+1), 1}).
\end{align*}
Let $|\phi_{jj^\ast}|$ denote the number of edges in the path $\phi_{jj^\ast}$
Since $a_{ij} = -a_{ji}$, rearranging gives
\begin{align*}
a_{1\, (n+1)} + a_{(n+1) \, j}  \,&\,\leq\,\, A(\phi_{1j^\ast}) - A(\phi_{jj^\ast})  + (|\phi_{jj^\ast}|-|\phi_{1j^\ast}|)\lambda(A) ,\\
a_{1\, (n+1)} + a_{(n+1) \, j}\, &\,\geq \,\, A(\phi_{1j^\ast}) - A(\phi_{jj^\ast})  + (|\phi_{jj^\ast}|-|\phi_{1j^\ast}|)\lambda(A) - 2\lambda(A).
\end{align*}
The quantities on the right hand side are constants that do not depend on the
new matrix entries we seek to find. They specify a solvable system of upper
and lower bounds for the quantities $a_{1\, (n+1)} + a_{(n+1) \, j}$
 for $j = 2,\ldots,n$. Fixing these $n-1$ sums arbitrarily in their required intervals yields $n-1$ linear equations.
Working modulo the $1$-dimensional subspace
of $V_{n+1}$ spanned by $ \sum_{j=1}^n (e_{n+1,j} - e_{j,n+1}) $,
 we add the extra equation $\sum_{j=1}^n a_{j \, (n+1)} = 0$. From these $n$
linear equations, the missing matrix entries $a_{1(n+1)}, a_{2(n+1)}, \ldots, a_{n(n+1)}$ can be computed uniquely. The resulting matrix $A \in \wedge_2 \R^{n+1}$ strictly satisfies all the necessary
inequalities, so it is in the interior of the required cone
$\Omega_\phi$. 

The quotient of $\wedge_2 \R^{n}$ modulo its
$n$-dimensional subspace $V_n \cap \wedge_2 \R^{n}$ has dimension $n(n{-}3)/2$.
The cones we are interested in, one for each kite $\phi$,
 are all pointed in this quotient space. From the inductive construction above,
 we see that each cone $\Omega_\phi \cap \wedge_2 \R^n$
is characterized by upper and lower bounds on linearly independent linear forms.
This proves that this cone is linearly isomorphic to the cone over a
standard cube of dimension $n(n-3)/2$. 
If $n=4$ then the cubes are squares, as shown in
\cite[Figure 1]{nmt}.
 
Failure to form a fan stems from the existence of disjoint critical cycles,
as discussed at the end of Section 3. For $n \geq 6$, we can fix two disjoint triangles and their adjacent 
cones in the normal fan of $\mathcal{C}_n$. By an analogous  argument 
to that given in Example  \ref{ex:ineq},  we conclude that the cones 
$\Omega_\phi \cap \wedge_2 \R^n$, as $\phi$ runs over kites, do not form a fan for $n \geq 6$.  
\end{proof}

\smallskip

In this note we examined the division of the space of all (skew-symmetric)
$n {\times} n$-matrices into open polyhedral cones that represent
distinct combinatorial types of tropical eigenpairs. Since
that partition is not a fan, interesting phenomena
happen for special matrices $A$, i.e.~those not
 in any of the open cones $\Omega_\phi$.
For such matrices $A$, the eigenvalue $\lambda$ is still unique but the 
polyhedral set  $\,{\rm Eig}(A) = \{\, x \in \mathbb{TP}^{n-1} \,: \,A \odot x = \lambda \odot x \,\}\,$
may have dimension $\geq 1$. Let
 $B^* = B \oplus B^{\odot 2} \oplus \cdots \oplus B^{\odot n}$ and let
$B_0^*$ be the submatrix of $B^*$ given by all columns $i$ such that
$B^\ast_{ii} = 0$.  It is well known (see e.g.~\cite[\S 4.4]{butkovic}) that
$$ {\rm Eig}(A) \,=\, {\rm Eig}(B) \,=\, {\rm Eig}(B^*) \,=\, {\rm Image}(B_0^*). $$
Thus, ${\rm Eig}(A)$ is a tropical polytope
in the sense of Develin and Sturmfels \cite{DS}, and we 
 refer to ${\rm Eig}(A)$ as the {\em eigenpolytope} of the matrix $A$.
 This polytope has $\leq n$ tropical vertices.

Each tropical vertex of an eigenpolytope  ${\rm Eig}(A)$ 
can be represented as the limit of eigenvectors $x(A_\epsilon)$
where $(A_\epsilon)$ is a sequence of generic matrices 
lying in the cone $\Omega_\phi$ for some fixed
connected function $\phi$. This means that the
combinatorial structure of the eigenpolytope ${\rm Eig}(A)$ is
determined by the connected functions $\phi$ that are
adjacent to $A$. 

 For example, let us revisit  the 
 (inconsistently subdivided) square $\{13,  32, 23, 31\}$ in Figure~\ref{fig:complex}.
 The $3 \times 3$-matrices that correspond to the points on that square have the form
$$ A \quad = \quad \begin{pmatrix} 0 & 0 & a \\ 
                                                                0 & 0 & b \\
                                                                 c & d & 0 \end{pmatrix}, \quad
 \hbox{where $a,b,c,d < 0$}. 
$$
One particular instance of this was featured in (\ref{eq:onematrix}).
The eigenpolytope ${\rm Eig}(A)$ 
of the above matrix is the tropical line segment spanned by the columns of $A$,
and its two vertices are limits of the eigenvectors coming
from the two adjacent facets of $\Sigma_3$.

It would be worthwhile to study  this 
further in the skew-symmetric case.
Using kites, can one classify all
 tropical eigenpolytopes ${\rm Eig}(A)$
where $A $ ranges over matrices in $ \wedge_2  \R^n$? 

\medskip

\bibliographystyle{siam}

\medskip
\end{document}